\documentclass[11pt]{amsart}
\usepackage[active]{srcltx}

\usepackage{latexsym}

\usepackage{amsmath,amssymb,amsthm,amsfonts,latexsym}
\usepackage{pstricks, pst-node, pst-text, pst-3d}
\usepackage[bookmarks]{hyperref}

\def\cal{\mathcal}


\setlength{\textwidth}{150mm} \setlength{\textheight}{215mm}

\setlength{\oddsidemargin}{.25in}

\setlength{\evensidemargin}{.25in} \setlength{\topmargin}{-0.2cm}

\setlength{\parskip}{.05in} \setlength{\hoffset}{-0.4cm}

\setlength{\headheight}{12pt} \setlength{\headsep}{25pt}

\let\=\partial

\newtheorem {pro}{Proposition}[section]
\newtheorem {thm}[pro]{Theorem}
\newtheorem{lem}[pro]{Lemma}

\theoremstyle{definition}
 \newtheorem {rem}[pro]{Remark}
\newtheorem {dfn}[pro]{Definition}


\newcommand{\Q} {\mathbb{Q}}

\newcommand{\R}{\mathbb{R}}

\newcommand{\N}{\mathbb{N}}

\newcommand{\rc}{{R}}
\newcommand{\V}{\mathcal{V}}

\newcommand{\hn}{\mathcal{H}}

\newcommand{\St}{\mathcal{S}}

\newcommand{\ep}{\varepsilon}

\newcommand{\pa}{\partial}

\title {A generalized Sard theorem on real closed fields}

\setcounter{section}{-1}

\makeatletter


\makeatother

\author[A. Valette and G. Valette]{Anna Valette and Guillaume Valette}

\address[A. Valette]{Instytut Matematyki Uniwersytetu
Jagiello\'nskiego, ul. S \L ojasiewicza, Krak\'ow, Poland}
\email{anna.valette@im.uj.edu.pl}
\address[G. Valette]
{Instytut Matematyczny PAN, ul. \'Sw. Tomasza 30, 31-027 Krak\'ow,
Poland} \email{gvalette@impan.pl}

\keywords{Semi-algebraic mappings, asymptotic critical values}

\thanks{Research partially supported by the NCN grant}

\subjclass[2010]{58K05, 14P10,  57R35}

\begin{document}
\maketitle

\begin{abstract}
We work with semi-algebraic functions on arbitrary real closed fields. 
We generalize the notion of critical values and prove a Sard type theorem in our framework. 
\end{abstract}

\section{Introduction}
The famous Sard's Theorem asserts that the set of critical values of a ${\cal C}^\infty$ map $f:\R^n\to\R^k$ is of Lebesgue measure zero. 
Let us recall that the set of critical values of $f$ is the image under $f$ of all the elements at which $f$ fails to be a submersion. 
In this article, we focus on semi-algebraic functions on real closed fields. 
The notion of Lebesgue measure does not really make sense on a real closed field.
However, the dimension of a semi-algebraic set can always be defined 
and it is well known that one can prove that the set of critical values of a mapping $f:R^n \to R^k$ has dimension less than $k$ \cite{bcr}, 
if $R$ stands for a real closed field. In this note, we establish a related result. 
We introduce a more general notion of critical values and show a Sard type theorem for these critical values, 
estimating the size of the set of our generalized critical values. There is no nice geometric measure theory on real closed fields. 
We therefore make use of the notion of $v$-thin sets introduced in \cite{v}. Further features of these critical values will be developped in forthcoming articles.

 We then give an application of our Sard's theorem to the study of asymptotic critical values of real mappings $f:X \to \R^k$, $X\subset \R^n$. We shall show that the set of asymptotic critical values of a semi-algebraic mapping $f:X \to \R^k$ has dimension less than $k$.  A similar result was also obtained by K. Kurdyka, P. Orro and S. Simon \cite{kos} who showed that the set of  generalized critical values at infinity for semi-algebraic mappings has dimension less than $k$. This result lead them to establish a generalization of Ehresmann's fibration theorem for nonproper mappings. This provided an effective way to ensure stability and yield that the set of bifurcation points of a semi-algebraic mapping is relatively small.



By $\rc$ we denote a real closed field. We refer to \cite{bcr} for basic facts about semi-algebraic sets. We denote by $cl(X)$ and $int(X)$ the closure and interior of a set and set $\delta X:=cl(X)\setminus int(X)$ as well as $fr(X):=cl(X)\setminus X$.  We will write $|x|$ for the Euclidean norm of $x\in R^n$ and $d(\cdot, \cdot)$ for the Euclidean distance, while $B^n(x,r)$ will stand for the ball of radius $r$ centered at $x$ for this metric.

\section{$v$-thin sets}We introduce the notion of $v$-thin sets as it was defined in \cite{v}.

\begin{dfn}\label{z-thin-set}
Let $X\subset\rc^n$  be a semi-algebraic subset and let $z\in\rc$ be positive.  A $k$-dimensional semi-algebraic subset
$X$ of $R^n$ is called {\bf $z$-thin} if for a generic orthogonal projection $\pi:\rc^n\to H^k$, where $H^k$ is a $k$-dimensional vector subspace of $R^n$, the image $\pi(X)$ does not contain any open ball of radius $z$.


\end{dfn}

Let $v\subset \rc$ be a convex subgroup, i.e. a set such that for any $x,y\in v$ and for any $t\in [0,1]\subset\rc$ we have $tx+(1-t)y\in v$.

 For example, suppose that  the field $R$ is the field of real algebraic Puiseux series $k(0_+)$ (on a right-hand-side neighborhood of $0$ in $\R$) endowed with the order that makes the indeterminate $T$ positive and smaller than any positive real number. Then, one can choose $v$ as the group of all the series satisfying $|x| \leq N T^2$ for  any $N$ in  $\R$. In the Hardy field of $ln-exp$ definable germs of one variable functions
 (in a right-hand side neighborhood of zero)
one may consider  the set of all the $L^p$ integrable germs of
series.

\begin{dfn}\label{thin-set}
We say that a set $X$ is {\bf $v$-thin} if it is $z$-thin for some positive $z\in v$.  

Let $j\ge \dim X $ be an integer.  For simplicity we say that $X$ is {\bf$(j,v)$-thin} if  it is   $v$-thin or if  $\dim X<j$ (with the convention $\dim \emptyset:=-1$). 

\end{dfn}

We shall need the fact that a semi-algebraic family of $v$-thin sets is $v$-thin (Proposition \ref{family}). 
This requires two preliminary lemmas.  Given $z \in R$ and $Y\subset R^n$, we define {\bf the $z$-neighborhood of $Y$} as the set $$\V_z(Y):=\{ x \in R^n| d(x,Y)<z \}.$$

\begin{lem}\label{lem_z_nghbd}
 If $Y\subset R^n$ is a semi-algebraic subset of dimension less than $n$, then for any $z \in v$, the set  $\V_z(Y)$ is $v$-thin.
\end{lem}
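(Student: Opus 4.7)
The plan is to exhibit a constant $M \in R$, dominated by a positive integer, such that $\V_z(Y) \subset R^n$ contains no open ball of radius $Mz$. Since $\V_z(Y)$ is open and of dimension $n$, the generic orthogonal projection $\pi \colon R^n \to H^n$ in Definition \ref{z-thin-set} may be taken to be the identity, so this bound is precisely the statement that $\V_z(Y)$ is $(Mz)$-thin. Because $v$ is a convex subgroup of $(R, +)$, it contains $Mz$ (being dominated by an integer multiple of $z \in v$), and hence $\V_z(Y)$ will be $v$-thin.

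The first step is to apply a Lipschitz cell decomposition of $Y$ with uniformly bounded Lipschitz constants: $Y = C_1 \sqcup \cdots \sqcup C_l$ where each cell, after a suitable orthogonal change of coordinates, is the graph of a $\Lambda$-Lipschitz semi-algebraic function $f_i \colon U_i \to R^{n - k_i}$ over an open cell $U_i \subset R^{k_i}$ of dimension $k_i \leq n - 1$, with the common Lipschitz constant $\Lambda$ a standard positive real. Such a decomposition is obtained by adapting Kurdyka's or Parusi\'nski's construction to the semi-algebraic category over a real closed field. A further generic linear change of coordinates then provides a unit vector $e \in R^n$ transverse to every cell, in the sense that the orthogonal projection of $e$ onto the fiber direction $R^{n - k_i}$ has norm bounded below by some standard positive $\varepsilon$ independent of $i$.

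The core estimate is that for any $p \in R^n$, the line $\ell := \{p + te : t \in R\}$ satisfies $|\ell \cap \V_z(C_i)| \leq 2(1 + \Lambda) z / \varepsilon$. Indeed, if $p + te \in \V_z(C_i)$, there exists $u \in U_i$ with $|(p + te) - (u, f_i(u))| < z$; projecting this inequality onto the fiber direction and using the Lipschitz bound on $f_i$ restricts $t$ to an interval of the stated length. Summing over the $l$ cells yields $|\ell \cap \V_z(Y)| \leq 2 M z$ with $M = l (1 + \Lambda) / \varepsilon$. If a ball $B(p, r) \subset \V_z(Y)$ existed, then the chord $\ell \cap B(p, r)$ would be a segment of length $2r$ contained in $\ell \cap \V_z(Y)$, forcing $2r \leq 2Mz$ and hence $r \leq Mz$.

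The principal obstacle is securing the Lipschitz cell decomposition with a uniform bound on the Lipschitz constants in the real closed field setting: this requires a semi-algebraic adaptation of metric arguments of Kurdyka and Parusi\'nski originally given over $\R$. A secondary point is confirming that the genericity constant $\varepsilon$ is bounded below by a standard positive real; this follows because the set of unit vectors tangent to some $C_i$ is a semi-algebraic subset of strictly lower dimension in the unit sphere, so a generic $e$ avoids it by a definite amount.
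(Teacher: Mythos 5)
Your overall blueprint (Lipschitz cell decomposition, then show a ball of radius a bounded multiple of $z$ cannot fit inside the tube) is close in spirit to the paper's, but your mechanism for ruling out a large ball—intersecting with a single transverse line $\ell$ and bounding the chord $|\ell \cap \V_z(Y)|$—is where the gap lies. Run your own estimate carefully: if $p + te, p + t'e \in \V_z(C_i)$ with witnesses $u, u' \in U_i$, then $|u-u'| \le |t-t'|\,|e_1| + 2z$ and $|f_i(u)-f_i(u')| \ge |t-t'|\,|e_2| - 2z$, so the Lipschitz bound yields $|t-t'|\bigl(|e_2| - \Lambda\bigr) \le 2(1+\Lambda)z$. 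The denominator is $\varepsilon - \Lambda$, not $\varepsilon$; the estimate is vacuous unless $|e_2^{(i)}| > \Lambda$ for \emph{every} cell $i$. That is a much stronger demand than ``$e$ avoids the tangent directions,'' and your justification for the uniform lower bound is off target on two counts. First, the set of unit tangent vectors to a curved codimension-one cell is generally \emph{not} of lower dimension in $S^{n-1}$ (already for a semi-algebraic arc in $R^2$ the tangent indicatrix is an arc of $S^1$). Second, even when that set is thin, what you actually need to avoid is the full-dimensional cone $\{|e_2^{(i)}| \le \Lambda\}$, and since the adapted splitting $R^{k_i}\oplus R^{n-k_i}$ varies with $i$, the intersection $\bigcap_i \{|e_2^{(i)}|>\Lambda\}$ may well be empty (e.g. two cells in $R^2$ whose fiber directions are the two coordinate axes leave no unit vector with both components $>\Lambda$ once $\Lambda \ge 1/\sqrt{2}$). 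To rescue this you would have to pin down a decomposition over standard coordinate subspaces with $\Lambda$ small relative to $n$, and then justify that a generic $e$ clears all the cones at once; none of that is in your sketch.

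The paper sidesteps the simultaneous-transversality problem entirely: it inducts on the number of cells, and for the last cell $A_k$ it moves the point $a$ along the \emph{fiber direction adapted to $A_k$} to find a nearby point $b$ with $B^n(b,Mz)\cap A_k=\emptyset$; the induction hypothesis handles the tube around the remaining cells. So the ``escape direction'' is chosen per cell rather than once and for all, which is precisely what avoids the obstruction you would have to confront. If you want to keep your line-chord approach, you must address the incompatibility of the transversality conditions head on; as written, the step ``a generic $e$ works with a standard $\varepsilon$'' is a genuine gap.
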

\begin{proof}Let $Y$ be of dimension $<n$ and   $z \in v$ be positive. We shall indeed show  that $\V_z(Y)$ is $N z$-thin for some $N \in \N$.
Since $\dim Y< n$ we can decompose the set $Y$ into semi-algebraic sets $A_1\cup\dots\cup A_k$ such that there exist some orthogonal automorphisms $\varphi_i:R^n\to R^n$, $i=1,\dots , k$,  such that $\Gamma_i:=\varphi_i(A_i)$ is the graph of a semi-algebraic $C$-Lipschitz function $\xi_i:B_i\to \R$, with $C\in \N$, $B_i \subset \R^{n-1}$ (see \cite{f}).

 We will proceed by induction on $k$, the case $k=0$ being vacuous. For simplicity, we will assume $\varphi_k=Id_{R^n}$ (we can work up to an orthogonal map).  Assume the result true for $(k-1)$, $k\ge 1$, and take $M \ge 4(C+1)$, $M\in \R$. 

Let $a \in A_k$. We claim that there is $b\in B^n(a,M^2 \cdot z)$, such that  $B^n(b,M\cdot z)\cap  A_k=\emptyset$.  Assume otherwise. It means that $d(b,A_k) \le M z$, for any $b\in B^n(a,M^2\cdot z)$. Take $b\in B^n(a,M^2 \cdot z)$ such that $|b-a|\ge \frac{M^2}{2}z$ and  $\pi(b)=\pi(a)$, where $\pi:R^n\to R^{n-1}$ is the canonical projection. Then, as $A_k$ is the graph of a $C$-Lipschitz function, a straightforward computation gives (since $M\ge \frac{C+1}{4}$): 
 $$|b-a|\le (C+1)d(b,A_k)\le (C+1)Mz\le \frac{M^2}{4}z,$$
a contradiction.

By induction on $k$, if we set $Y':= A_1\cup \dots \cup A_{k-1}$ then $\V_z(Y')$ must be $N z$-thin, for some $N\in\N$.   Hence, for any $b \in \V_z(Y')$, the ball $B^n(b,N\cdot z)$ does not fit in $\V_z(Y')$. Let $z':=2M^2 z$, where $M:=N+1$. We are going to show that $B^n(a,z')$ does not fit in $\V_z(Y)$, which will yield the desired result. By the above paragraph (we may assume $N\ge 4(C+1)$), we know that there is $b\in B^n(a,M^2\cdot z)$  such that $B^n(b,M\cdot z)$ does not meet $A_k$. Hence, $B^n(b, N\cdot z)$ does not meet  $\V_z(A_k)$.  As (by choice of $N$) this ball is not included in $\V_z(Y')$, it therefore cannot be included in $\V_z(Y)=\V_z(Y')\cup \V_z(A_k)$.  
As $B^n(b, N\cdot z)\subset B^n(a,z')$, we are done.
\end{proof}


Let us recall that given two nonempty subsets $X$ and $Y$ of a semialgebraic subset of $ R^n$, the Hausdorff distance is defined as:
$$d_\hn(X,Y):= \max \;(\sup_{x \in X} \inf_{y \in Y} d(x,y), \, \sup_{y\in Y} \inf_{x\in X} d(x,y)).$$
Here, we adopt the convention that $d_\hn(X,Y)$ is equal to infinity if one of these suprema is infinite.

 Given $X \in R^{m+n}$ and $t \in R^m$ we set
 $$X_t:=\{x \in R^n |(x,t)\in  X\}.$$
Any family $(X_t)_{t \in R^m}$ constructed in this way is then called {\bf a semi-algebraic family of sets}.

\begin{lem}\label{thin}
 Let $X\subset\rc^n$ be a semi-algebraic  set of dimension $k$. The set $X$ is $v$-thin iff there exists a semi-algebraic set $Y\subset R^n$ of dimension less than $k$ such that
 $d_{\cal H}(X,Y)\in v$.  Moreover, if $(X_t)_{t \in R^m}$ is a semi-algebraic family of subsets of $R^n$, then the corresponding family  $(Y_t)_{t \in R^m}$ can be chosen semi-algebraic as well.
\end{lem}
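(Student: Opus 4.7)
For the direction $(\Leftarrow)$, if $Y\subset R^n$ satisfies $\dim Y<k$ and $d_\hn(X,Y)<z$ for some positive $z\in v$, then $X\subset \V_z(Y)$. For any orthogonal projection $\pi:R^n\to H^k$, the $1$-Lipschitz property gives $\pi(X)\subset\V_z(\pi(Y))$ inside $H^k\cong R^k$, and $\dim\pi(Y)\le\dim Y<k$. Applying Lemma \ref{lem_z_nghbd} inside the ambient space $R^k$, the set $\V_z(\pi(Y))$ is $(Nz)$-thin there for some $N\in\N$; in ambient dimension equal to the dimension of the projection, this is just the statement that it contains no open ball of radius $Nz$. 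Consequently $\pi(X)$ contains no such ball, so $X$ is $(Nz)$-thin, hence $v$-thin.

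For the direction $(\Rightarrow)$, assume $X$ is $z$-thin for some $z\in v$. Decompose $X=A_1\cup\dots\cup A_p$ into semi-algebraic $C$-Lipschitz cells (as provided by \cite{f}): after an orthogonal transformation $\varphi_j:R^n\to R^n$, each $A_j$ becomes the graph of a $C$-Lipschitz semi-algebraic map $\xi_j:B_j\to R^{n-d_j}$ with $B_j\subset R^{d_j}$ and $d_j=\dim A_j\le k$. The cells with $d_j<k$ are added to $Y$ directly. For each $k$-dimensional cell $A_j$, the key point is that $B_j$ is $v$-thin in $R^k$: the orthogonal projection $\pi_j:R^n\to H_j^k$ aligned with $A_j$ (i.e. $H_j^k=\varphi_j^{-1}(R^k\times\{0\})$) restricts to a bi-Lipschitz map from $A_j$ onto a set isometric to $B_j$, with constants depending only on $C$. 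Although $\pi_j$ need not be generic, generic projections are dense in the Grassmannian, so a sufficiently close generic $\pi$ keeps $\pi|_{A_j}$ bi-Lipschitz with uniformly bounded constants. Since $\pi(X)\supset\pi(A_j)$ contains no open ball of radius $z$, and $\pi(A_j)$ is bi-Lipschitz equivalent to $B_j$ with bounded constant, $B_j$ contains no open ball of radius $cz$ for some $c=c(C)>0$.

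Now apply the ``$n=k$'' case to $B_j\subset R^k$: for any $x\in B_j$ the ball $B^k(x,cz)$ is not contained in $B_j$, so the segment joining $x$ to a chosen $q\in B^k(x,cz)\setminus B_j$ must cross $\delta B_j=cl(B_j)\setminus int(B_j)$, giving $d(x,\delta B_j)<cz$; since $\delta B_j\subset cl(B_j)$, conversely $d(y,B_j)=0$ for $y\in\delta B_j$. Thus $d_\hn(B_j,\delta B_j)\le cz$ and $\dim\delta B_j<k$. Extending $\xi_j$ to a $C$-Lipschitz map $\bar\xi_j$ on $cl(B_j)$ and setting $Y_j^A:=\varphi_j^{-1}(\mathrm{graph}(\bar\xi_j|_{\delta B_j}))$, the graph map being $\sqrt{1+C^2}$-Lipschitz yields $\dim Y_j^A<k$, $Y_j^A\subset cl(A_j)$, and $d_\hn(A_j,Y_j^A)\le\sqrt{1+C^2}\cdot cz\in v$. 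Taking $Y$ to be the union of the low-dimensional cells with the $Y_j^A$ then gives $\dim Y<k$ and $d_\hn(X,Y)\in v$. The semi-algebraic family version is a consequence of the uniformity: the cell decomposition, the Lipschitz extension, the boundary operation, and the lifting can all be performed semi-algebraically in the parameter $t\in R^m$.

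The main obstacle is the thinness transfer in the $(\Rightarrow)$ direction: $v$-thinness is defined via \emph{generic} projections, but the cell-aligned projection $\pi_j$ is a specific choice that need not be generic. The fix is to perturb $\pi_j$ within the open dense set of generic projections while controlling the bi-Lipschitz constant of $\pi|_{A_j}$ uniformly, so that the thinness of $\pi(X)$ transfers to $B_j$ with a constant depending only on $C$, and the final Hausdorff distance between $X$ and $Y$ remains in the convex subgroup $v$.
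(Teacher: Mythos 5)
Your proof is correct and takes essentially the same route as the paper's: the $(\Leftarrow)$ direction by pushing $Y$ through a generic projection and invoking Lemma~\ref{lem_z_nghbd}, and the $(\Rightarrow)$ direction by taking $Y=\delta X$ in the top-dimensional case and reducing the general case to it through a Lipschitz-cell decomposition. Your version spells out more detail than the paper's one-line reduction ``the result follows from the case $\dim X=n$,'' in particular by explicitly perturbing the cell-aligned projection to a nearby generic one while keeping the bi-Lipschitz constants under control.
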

\begin{proof}
 We start with the ``only if'' part. If  the set $X$ has nonempty interior, it is enough to take  $Y$ as the set $\delta X$. Indeed, if  $d_{\cal H}(X,Y)\notin v$, it follows that there exists a ball of radius $z\in v$ which fits in $X$, which implies that $X$ is not $v$-thin.  In the case where  $X$ is of empty interior, we can decompose  this set into a finite union of graphs of Lipschitz mappings and the result follows from the case $\dim X=n$.

Let us establish the ``if'' part. By Lemma \ref{lem_z_nghbd}, the projection of $X$ onto a generic $k$-dimensional hyperplane lies in a $z$-neighborhood of $Y$  for $z\in v$, which, by the previous Lemma, is a $v$-thin set.  Hence, $X$ is $v$-thin.

 By construction of $Y$ and quantifier elimination, if $(X_t)_{t \in R^m}$ is a semi-algebraic family of subsets of $R^n$, then the corresponding family  $(Y_t)_{t \in R^m}$ is semi-algebraic as well (since $\delta X_t$ is then a semi-algebraic family).
\end{proof}

As a matter of fact, we get the following proposition which will be useful to establish our Sard type theorem.

\begin{pro}\label{family}
Let $X\subset R^{m+n}$ be a semi-algebraic set. If $X_t$ is  $v$-thin for all $t\in R^m$ then $X$ is a $v$-thin set.
\end{pro}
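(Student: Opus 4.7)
My plan is to produce, fiber by fiber, a semi-algebraic set $Y\subset R^{m+n}$ of strictly smaller dimension than $X$ that is Hausdorff-close to $X$, and then invoke Lemma \ref{thin} in reverse.

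By the ``moreover'' clause of Lemma \ref{thin}, the hypothesis that each $X_t$ is $v$-thin furnishes a semi-algebraic family $(Y_t)_{t\in R^m}$ with $\dim Y_t<\dim X_t$ and $d_{\cal H}(X_t,Y_t)\in v$ for every $t$. Set $Y:=\{(t,y)\in R^{m+n}:y\in Y_t\}$; this is semi-algebraic by construction. By the ``if'' direction of Lemma \ref{thin} applied in $R^{m+n}$, it then suffices to verify (a) $\dim Y<\dim X$ and (b) $d_{\cal H}(X,Y)\in v$.

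For (a), I would partition $\pi(X)\subset R^m$ (with $\pi:R^{m+n}\to R^m$ the standard projection) into finitely many semi-algebraic strata on which both $\dim X_t$ and $\dim Y_t$ are constant, and use the additivity of semi-algebraic dimension along $\pi$: on a stratum $U$ with $\dim X_t\equiv k_U$ and $\dim Y_t\equiv l_U<k_U$, one has $\dim(X\cap\pi^{-1}(U))=\dim U+k_U$ and $\dim(Y\cap\pi^{-1}(U))=\dim U+l_U$. Taking the maximum over the (finitely many) strata yields the strict inequality $\dim Y<\dim X$.

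For (b), since $d((t,x),(t,y))=|x-y|$, one has the crude fiberwise bound
\[
d_{\cal H}(X,Y)\;\le\;h^{*}:=\sup_{t\in\pi(X)} d_{\cal H}(X_t,Y_t),
\]
and $h(t):=d_{\cal H}(X_t,Y_t)$ is a semi-algebraic function of $t$, so its image is a semi-algebraic subset of $v\cap R_{\ge 0}$. I expect the main obstacle to be showing that $h^{*}$ itself lies in $v$, since a priori the supremum of elements of $v$ need not. One may assume $v\ne R$, the other case being trivial. Then $h$ must be bounded, for otherwise every $M\in R_{>0}$ would satisfy $M<h(t_M)\in v$ for some $t_M$, and convexity of $v$ together with $0\in v$ would yield $M\in[0,h(t_M)]\subset v$, forcing $v=R$. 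Next, if $h^{*}\notin v$, then $h^{*}/2\notin v$ either (since $v$ is an additive group and $h^{*}/2+h^{*}/2=h^{*}$), yet there is $t_0$ with $h(t_0)>h^{*}/2$ and $h(t_0)\in v$, and convexity would give $h^{*}/2\in[0,h(t_0)]\subset v$, a contradiction. Hence $h^{*}\in v$, so $d_{\cal H}(X,Y)\in v$, and Lemma \ref{thin} concludes that $X$ is $v$-thin.
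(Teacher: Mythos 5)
Your proof is correct and follows essentially the same route as the paper: apply the ``only if'' part of Lemma~\ref{thin} fiberwise to get a semi-algebraic family $(Y_t)$, assemble it into $Y\subset R^{m+n}$, and close the argument with the ``if'' part of Lemma~\ref{thin}. The paper's version is terser: it simply writes that there is a single $z\in v$ with $X_t\subset\V_z(Y_t)$ for all $t$, and that ``clearly'' $\dim Y<\dim X$, without further justification. You supply exactly the two missing details: (a) the fiberwise dimension count over a finite stratification of the base, and (b) the fact that $h^*=\sup_t d_{\cal H}(X_t,Y_t)$ still lies in $v$, which you prove cleanly by combining the group property ($h^*\notin v\Rightarrow h^*/2\notin v$) with convexity (any $h(t_0)>h^*/2$ would force $h^*/2\in[0,h(t_0)]\subset v$). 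Point (b) is in fact the only non-obvious step in this proposition, and it is good that you flagged and resolved it rather than assuming the supremum of a $v$-valued family is again in $v$. One small remark: you do not strictly need the full two-sided Hausdorff bound $d_{\cal H}(X,Y)\le h^*$; the paper only uses the one-sided inclusion $X\subset\V_z(Y)$, which is all that the proof of the ``if'' direction of Lemma~\ref{thin} requires, so you could drop the implicit assumption that $Y_t=\emptyset$ whenever $X_t=\emptyset$ (though it is harmless to keep it).
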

\begin{proof}
Observe that by Lemma \ref{thin} for any $t \in R^m$ we have a semi-algebraic set $Y_t$ of smaller dimension and $z\in v$ such that $X_t\subset \V_z(Y_t)$. Clearly, the set $$Y=\bigcup\limits_{t\in R^m} Y_t\times\{t\}$$ has dimension less than $\dim X$. As $X \subset \V_z(Y)$, by Lemma \ref{thin}, this shows that the set $X$ is $v$-thin.    
\end{proof}

\section{Sard's theorem}
Let us recall the definition of the Rabier function \cite{r} which measures the distance of a linear operator $A$ to the set of singular operators: $$\nu(A):=\inf_{||\varphi||=1}||A^\ast\varphi||.$$
 See \cite{kos,r} for basic properties of this function. In the case where $A$ is an operator from $R^n$ to $R^k$ with $n\ge k$, one can equivalently characterize $\nu(A)$ by $$\nu(A)=\inf\{|Av|| v \perp \ker A, |v|=1 \}.$$

 We say that $z\in R$ is an {\bf infinitesimal} if $|z|$ is smaller than any positive rational number. 
We will say that a subset of $R^n$ is {\bf  bounded} if it is included in a ball $B^n(0,N)$ in $R^n$, with $N \in \N$.   The field $R$ being not necessarily Archimedean it is worthy of notice that we require $N$ to be an  integer.

\begin{dfn}
Let $X\subset\rc^n$ be a bounded manifold, $z\in\rc$ be an infinitesimal,  and  
let $f:X\to {\rc}^k$ be a ${\cal C}^1$ semi-algebraic mapping. 
We say that  $x\in X$ is a {\bf $z$-critical point of $f$}  if $\nu(d_xf)$ is less than $z$.
We denote the set of all $z$-critical points by $c_z(f)$, i.e. $$c_z(f)=\{x\in X|\, \nu(d_xf)<z\}.$$
\end{dfn}

We now come to the main theorem of this article.

\begin{thm}\label{sard}Let $v$ be a convex subgroup of $\rc$  and  
let $f:X\to {\rc}^k$ be a ${\cal C}^1$ semi-algebraic function, $X$ bounded manifold. 
For any  infinitesimal $z\in v$ the image $f(c_z(f))$ is $(k,v)$-thin.
\end{thm}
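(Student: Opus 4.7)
If $\dim X<k$, then $\dim f(c_z(f))\le\dim X<k$, so $f(c_z(f))$ is $(k,v)$-thin by the convention built into Definition~\ref{thin-set}. We henceforth assume $\dim X\ge k$.

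By Lemma~\ref{thin} it suffices to produce a semi-algebraic set $Y\subset R^k$ with $\dim Y<k$ and $d_{\mathcal H}(f(c_z(f)),Y)\in v$. Since $v$ is a convex additive subgroup, so that $Nz\in v$ whenever $z\in v$ and $N\in\N$, this will follow once we exhibit an inclusion
\[
 f(c_z(f))\subset \V_{Nz}(Y)
\]
for an integer $N$ depending only on $X$ and $f$. The natural candidate is
\[
 Y:=f(c_0(f))\,\cup\,\{y\in R^k:\exists (x_n)\subset X,\;f(x_n)\to y,\;\nu(d_{x_n}f)\to 0\},
\]
namely the union of the ordinary critical values of $f$ with the asymptotic critical values produced by sequences approaching $\delta X$; the classical semi-algebraic Sard theorem~\cite{bcr}, applied to $f$ and to its behaviour near $\delta X$, gives $\dim Y<k$.

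To establish the inclusion, I would argue as follows. Given $x\in c_z(f)$, the definition of $\nu$ provides a unit vector $v_x\in T_xX$, orthogonal to $\ker d_xf$, with $|d_xf(v_x)|<z$. Using a cell decomposition of $X$ adapted to $\nu$ together with semi-algebraic curve selection, one constructs a semi-algebraic arc $\gamma:[0,1]\to \overline X$ starting at $x$ and moving at each time along the almost-kernel direction of $d_{\gamma(t)}f$, terminating at a point $\gamma(1)\in c_0(f)\cup\delta X$ whose image lies in $Y$. Along $\gamma$ the bound $|d_{\gamma(t)}f(\gamma'(t))|\le z$ is preserved; the boundedness of $X$ together with the monotonicity theorem forces the arc-length of $\gamma$ to be bounded by an integer $N$ independent of $x$. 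The standard change-of-variable estimate then gives
\[
 |f(x)-f(\gamma(1))|\le \int_0^1 |d_{\gamma(t)}f(\gamma'(t))|\,dt \le Nz,
\]
placing $f(x)\in \V_{Nz}(Y)$, as required.

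The main obstacle is constructing the arc $\gamma$ with the prescribed almost-kernel behaviour while keeping $\nu(d_{\gamma(t)}f)$ below $z$ throughout; this is a \L{}ojasiewicz--Kurdyka style argument in which the semi-algebraicity of the Rabier function replaces the measure-theoretic input used in the classical Sard proof. An equivalent inductive organisation slices $f(c_z(f))$ by the hyperplanes $\{y_k=t\}$ and invokes Proposition~\ref{family} to reduce $v$-thinness in $R^k$ to $v$-thinness of each slice in $R^{k-1}$; the base case $k=1$ is precisely the arc argument above applied to a scalar function, while the inductive step needs a careful case distinction on the singular direction of $d_xf$ relative to $e_k$ so as to relate every $z$-critical point of $f$ on the level set $\{f_k=t\}$ either to a $Cz$-critical point of $f_k$ or to a $Cz$-critical point of the restriction $(f_1,\dots,f_{k-1})|_{f_k^{-1}(t)}$, for a uniform constant $C$.
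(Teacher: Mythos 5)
Your proposal has two genuine gaps that prevent it from being a proof, and the approach it sketches does not match the mechanism that actually makes the result true.

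First, the set $Y$ you propose is circular. Its second constituent is precisely a set of asymptotic critical values of $f$, and the estimate $\dim Y<k$ for that part is \emph{not} the classical semi-algebraic Sard theorem of \cite{bcr}, which only controls $f(c_0(f))$. It is essentially the Kurdyka--Orro--Simon result, and in this paper that statement (Theorem~\ref{thm_kurdyka}) is \emph{deduced from} Theorem~\ref{sard}, not the other way around. Moreover, on an arbitrary real closed field the definition of $Y$ via sequences $(x_n)$ has no direct meaning, and no dimension bound for such a set is available independently of the theorem you are trying to prove.

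Second, the arc construction, which you yourself flag as ``the main obstacle,'' is where the proof would actually have to be carried out, and as sketched it does not work. If $\gamma$ flows along the almost-kernel direction, then $|d_{\gamma(t)}f(\gamma'(t))|=\nu(d_{\gamma(t)}f)$, but nothing forces $\nu(d_{\gamma(t)}f)$ to remain below $z$ (or even below $Cz$) once you leave the point $x$: the arc may exit $c_z(f)$ long before it reaches $c_0(f)$ or $\delta X$, and you have no control over $\int_0^1\nu(d_{\gamma(t)}f)\,dt$. The appeal to the monotonicity theorem for a uniform arc-length bound is also misplaced; the uniform Lipschitz constants in such arguments come from $L$-regular cell decompositions (Fischer~\cite{f}), not from the one-variable monotonicity theorem. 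So the displayed estimate $|f(x)-f(\gamma(1))|\le Nz$ is unsupported.

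The paper's proof avoids both problems. It never tries to connect a $z$-critical point to a genuine critical point or to $\delta X$. In the base case $k=1$ it takes an $L$-regular cell decomposition of $c_z(f)$ so that any two points of a cell are joined by an arc \emph{inside that cell} with derivative $\le N$; since $\nu(d_xf)<z$ everywhere on the cell, the mean value theorem gives directly that $f$ varies by at most $Nz$ on each cell, and a finite union of sets of diameter $\le Nz$ is $v$-thin. In the inductive step, it works on the graph $\Gamma$ of $f|_{c_z(f)}$, uses the vectors $w(x),v(x)$ that respectively maximise and minimise $|\pi(\cdot)|$ on the unit sphere of the normal space to $K_x=\ker\pi|_{T_x\Gamma}$, proves the quantitative estimate $|\pi(v_1(x))|\le 2z$ after decomposing $v=v_1+\alpha w$, and concludes that the fibers $A_t$ consist of $16z$-critical points of the sliced maps $g_t$; then Proposition~\ref{family} assembles the slices. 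Your closing paragraph gestures at this fibered organisation but does not give the case distinction or the quantitative estimates, which are the content of the induction. As it stands, the argument does not prove the theorem.
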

\begin{proof}
 We shall argue by induction on $k$. 
 We may assume that $f$ is everywhere a submersion, since, by the classical Sard's theorem \cite{bcr}, the image of the singular points has dimension less than $k$.

Assume first that $k=1$ and take a partition $\Sigma$ of  $c_z(f)$ such that for every $S\in \Sigma$, every couple of points $x,y$ of $S$ can be joint by a piecewise ${\cal C}^1$ arc $\gamma:[0,1]\to  S$ that has derivative bounded by $N \in \N$ (that such a partition exists follows from existence of $L$-regular cell decompositions \cite{f}). Take two arbitrary points $x$ and $y$ in $S\in \Sigma$ as well as such an arc $\gamma$. Let $\phi(t):=f(\gamma(t))$ and observe that $$|\phi'(t)|=|<\pa_{\gamma(t)}f, \gamma'(t)>|\le Nz$$ which means in particular that $\phi'(t)$ belongs to $v$.  
  By the mean value theorem, there is $ t \in [0, 1]$ such that $$\phi(1)-\phi(0) =\phi'(t).$$
As $|\phi'(t)|\le  Nz$ for all $t \in [0,1]$, we get that   $|f(x)-f(y)|\le  N z$. This shows that $f(c_z(f))$ is $(1,v)$-thin.

 We now assume the result true for $(k-1)$, $k \ge 2$.  Let $\Gamma$ denote the graph of $f_{|c_z(f)}$. 
   Let $n$ be such that the set $X$ is included in $R^n$ and let $\pi:R^{n+k}\to R^k$ be the projection onto the  $k$ last coordinates. For each $x\in \Gamma$ let $w(x)$ be a unit vector such that 
\begin{equation}|\pi(w(x))|=\sup\{|\pi(w)|\,\,|\,\, |w|=1\,\, \mbox{\rm and}\,\, w\in T_x\Gamma\}.\end{equation}
By semi-algebraic choice, the vector field $w$ can be required to be a semi-algebraic.  The vector $w(x)$ is normal to 
$$K_x:=\ker \pi_{|T_x\Gamma}=T_x\Gamma\cap (R^{n}\times\{0_{R^k}\}).$$
Let $v(x)$ be  a semi-algebraic vector field such that 
\begin{equation}
|\pi(v(x))|=\min\{|\pi(v)|\,\,|\,\, |v|=1, v\perp K_x, v\in T_x\Gamma\}.
\end{equation}
Let 
$$ b(x):=|\pi(w(x))|.$$ Since $k\ge 2$, the vector $v$  can be required to be ortogonal to $w$. 
As $w$ is a semi-algebraic unit vector field, for any positive rational number $\varepsilon$, there is a finite partition of $\Gamma$, say $V_1,\dots, V_p$, such that for any $i$, there is $f_i\in S^{k-1}$ such that for each $x\in V_i$ we have:
$$
|\frac{\pi(w(x))}{|\pi(w(x))|}-f_i|<\varepsilon.$$
We can work separately on every $V_i$  and up to a linear isometry of $R^k$. We will thus assume for simplicity that for all $x\in \Gamma$ we have \begin{equation}\label{1}|e_{k}-\frac{\pi(w(x))}{|\pi(w(x))|}|<\varepsilon ,\end{equation} where $e_{k}$ is the last vector of canonical basis of $R^k$.  We do not specify $\ep$ as it simply needs to be a small positive constant in $\Q$.
Let also $$\Delta_x:=e_{n+k}^{\perp}\cap T_x\Gamma.$$
We claim that 
\begin{equation}\label{3}
d(w(x),\Delta_x)\ge 1-\varepsilon>0.
\end{equation} 
Assume otherwise i.e. $w(x)=w_1(x)+w_2(x)$ with $w_1(x)\in\Delta_x$ and $|w_2(x)|< 1-\varepsilon $. 
By definition of $w$ and $b$ we get
\begin{equation}\label{2}
|<e_{k},\pi(w(x))>| \overset{(\ref{1})}{\ge}  b(x) (1-\varepsilon).\end{equation}
Moreover, $$|\pi(w_2(x))|\le b(x)|w_2(x)|< b(x)(1-\varepsilon).$$
As $<e_{k},\pi(w_1)>\equiv 0$ we get 
$$|<e_{k},\pi(w(x))>|=|<e_{k},\pi(w_2(x))>|<b(x)(1-\varepsilon),$$ contradicting (\ref{2}).

Since $v(x)$ is orthogonal to $w(x)$ we thus have by (\ref{3}) for $\ep$ small enough:
\begin{equation}\label{omega}
 d(v(x),\Delta_x)\le\frac{1}{2}.
\end{equation}
 We can decompose the vector $v$ as follows:
$$v=v_1+\alpha w,\; \,\, \mbox{ with}\, v_1\in\Delta_x, \;\alpha\in\R.$$
As $w$ and $v$ are normal to $K_x$, so is $v_1$.
Let $\pi_2:R^{k}\to e_{k}^\perp $  be the projection along the span $<\pi(w)>$. Observe that it follows from (\ref{1}) that $|\pi_2| \le 2$ (provided $\ep$ was chosen small enough). Note also that, by (\ref{3}) and (\ref{omega}), we have $\alpha< \frac{3}{4}$ (for $\ep$ small enough),  so that \begin{equation}\label{eq_v1}
|v_1(x)|\ge 1-\frac{3}{4}= \frac{1}{4}.\end{equation}

%
Hence, as $\pi(v_1(x))\in e_{k}^\perp$ (since $v_1(x)\in e_{n+k}^\perp$) , we have for any $x\in \Gamma $:
\begin{equation}\label{eq_pi_v_1}
|\pi(v_1(x))|=|\pi_2(\pi(v_1(x)))|=|\pi_2(\pi(v(x)))| \le 2|\pi(v(x))|\le 2z. 
 \end{equation}

It means that for every $t \in R$, every point $x\in  \Gamma_t$ is an $8z$-critical point of $\pi_{|\Gamma_t}$ (since $v_1(x)\perp K_x$ and, by (\ref{eq_v1}), $|v_1(x)|\ge \frac{1}{4}$). 
Let $$A_t:=\Lambda( \Gamma_t),$$ where $\Lambda:R^n \times R^k \to R^n$ is the projection onto the first factor. Clearly, $$\dim A_t=\dim \Gamma_t\le l-1$$ (the set $\Gamma_t$ cannot be $l$-dimensional for $f$ is a submersion). For $t\in R$,  write $f(x,t)$ as $(g_t(x),t)\in R^{k-1}\times R$ for each   $x\in A_t$.  This defines a mapping $g_t:A_t \to R^{k-1}$ (observe that the graph of $g_t$ is $\Gamma_t$).

 We claim that every point of $A_t$ is a $16z$-critical for  $g_t$, i.e. that  $c_{16z}(g_t)=A_t$. To see this, pick a point $\xi \in A_t$. As $x=(\xi,g_t(\xi))$ is  $8z$-critical point of $\pi_{|\Gamma_t}$, we know that there is a unit vector $\omega\in T_x \Gamma_t$ such that $|\pi(\omega)| \le 8z$. This implies that $$|\Lambda(\omega)|\ge \sqrt{1-(8 z)^2 }\ge \frac{1}{2}$$ (since $z$ is an infinitesimal). Hence, if we set $\omega':= \frac{\Lambda(\omega)}{|\Lambda(\omega)|}$ we get  $$|d_x g_t(\omega')|\le 2| \pi(\omega)|\le 16 z,$$ 
as required. 

 Consequently, by induction on $k$, the set  $g_t(A_t)$ is $v$-thin for every $t\in R$.  As, by definition of $g$,  $g_t(A_t)=f(c_z(f))_t$, by Lemma \ref{family}, this entails that the set $f(c_z(f))$ is $v$-thin.
\end{proof}

\section{Application to the field of the real numbers}
Let $\St(0,\ep)$ denote the ring of semi-algebraic functions on $(0,\ep)$ (in $\R$), $\ep $ positive real number, 
and let   $k(0_+)$ be the ordered field defined by the direct limit 
$\underset{\longrightarrow}{lim}\, \St((0,\ep))$. 
It is the set of semi-algebraic functions $f:(0,\ep)\to \R$, quotiented by the equivalence relation that identifies two semi-algebraic functions that coincide on a right-hand-side neighborhood of zero.
 Let then $v_{0_+}$ be the additive subgroup of $k(0_+)$ consisting of the germs at $0$ of functions of $\St(0,\ep)$  tending to zero at $0$. It is nothing but the set of all the infinitesimals of $k(0_+)$.

 Let $(A_t)_{t \in \R}$ be a semi-algebraic family of subsets of $\R^n$ and denote by $A_{0_+}$ the generic fiber of the family, that is to say the subset of $k(0_+)^n$ constituted by all the germs of map-germs $f\in k(0_+)^n$ such that $f(t) \in A_t$ for all $t$ positive and small enough (see \cite{bcr} for more details on generic fibers). If $(g_t)_{t \in \R}$ is a semi-algebraic family of functions,   we then denote by $g_{0_+}$ the generic fiber of the family $g_t$, i.e. the mapping whose graph is the generic fiber of the family constituted by the graphs of the functions $g_t$, $t \in \R$.

\begin{lem}\label{lem_familles}
 If  $A_{0_+}$ is $v_{0_+}$-thin then the set $$B:=cl( \bigcup_{t>0} (A_t \times \{t\}))\cap (\R^n \times \{0\})$$ has dimension less than $\dim A_t$, for $t >0$ small ($\dim A_t$ is constant for $t>0$ small).  
\end{lem}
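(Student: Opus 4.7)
The plan is to approximate the family $(A_t)_{t>0}$ by a family $(Y_t)_{t>0}$ of strictly lower dimensional semi-algebraic sets that are Hausdorff-close to the $A_t$'s, and then bound the dimension of $B$ by that of the analogous limit set for $(Y_t)$.

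First, apply Lemma \ref{thin} to the semi-algebraic set $A_{0_+}\subset k(0_+)^n$: there is a semi-algebraic set $Y'\subset k(0_+)^n$ of dimension strictly less than $\dim A_{0_+}$ with $d_{\cal H}(A_{0_+},Y')\in v_{0_+}$. Since $A_{0_+}$ is the generic fiber of the semi-algebraic family $(A_t)_{t\in\R}$, the ``moreover'' part of Lemma \ref{thin} realizes $Y'$ as the generic fiber $Y_{0_+}$ of a semi-algebraic family $(Y_t)_{t\in\R}$ of subsets of $\R^n$. Since dimension is preserved by passage to the generic fiber, $\dim Y_t<\dim A_t$ for $t>0$ sufficiently small; and since $d_{\cal H}(A_t,Y_t)$ is a semi-algebraic function of $t$ whose germ at $0_+$ lies in $v_{0_+}$, it tends to $0$ as $t\to 0^+$.

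Set $E:=\bigcup_{t>0}(Y_t\times\{t\})\subset\R^n\times\R$ and $B_Y:=cl(E)\cap(\R^n\times\{0\})$. I claim $B\subset B_Y$: if $(x,0)\in B$, pick a sequence $(x_n,t_n)\to(x,0)$ with $t_n>0$ and $x_n\in A_{t_n}$; the Hausdorff convergence allows one to choose $y_n\in Y_{t_n}$ with $|x_n-y_n|\le d_{\cal H}(A_{t_n},Y_{t_n})+1/n$, so $(y_n,t_n)\to(x,0)$ and thus $(x,0)\in B_Y$. Since $E$ is disjoint from $\R^n\times\{0\}$, we have $B_Y\subset cl(E)\setminus E$, a subset of the frontier of the semi-algebraic set $E$.

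Finally, the standard semi-algebraic inequality $\dim(cl(E)\setminus E)<\dim E$, together with $\dim E=\dim Y_t+1$ for small $t>0$, gives
\begin{equation*}
\dim B\le\dim B_Y\le\dim(cl(E)\setminus E)\le\dim E-1=\dim Y_t<\dim A_t,
\end{equation*}
as required. The main subtlety is the translation between the $k(0_+)$-picture provided by Lemma \ref{thin} and the genuine $\R$-parametric picture, in particular the observation that an infinitesimal Hausdorff distance in $k(0_+)^n$ corresponds precisely to the condition $d_{\cal H}(A_t,Y_t)\to 0$; once this dictionary is in place, everything reduces to the basic frontier dimension inequality.
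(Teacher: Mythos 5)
Your argument is essentially the paper's own: apply Lemma \ref{thin} to obtain a lower-dimensional family $(Y_t)$, translate the infinitesimal Hausdorff distance at $0_+$ into $d_{\cal H}(A_t,Y_t)\to 0$ as $t\to 0^+$, and replace the limit set of $A_t$ by the limit set of $Y_t$. You are a bit more explicit than the paper in the final step — proving only the inclusion $B\subset B_Y$ (which indeed suffices) and invoking the frontier inequality $\dim(cl(E)\setminus E)<\dim E$ — with the one nitpick that $\dim E=\dim Y_t+1$ only holds after restricting the union defining $E$ to small $t>0$, which is harmless since $B_Y$ depends only on the germ of the family at $0$.
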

\begin{proof}
If $A_{0_+}$ is $v_{0_+}$-thin then by  Lemma \ref{thin} there is a subset $Y_{0_+}$ of $k(0_+)^n$ such that $d_{\hn}(A_{0_+},Y_{0_+})$ is an infinitesimal and $\dim Y_{0_+}<\dim A_{0_+}$. It means that the set $Y_{0_+}$ is the generic fiber of a family $(Y_t)_{t \in \R}$ such that $d_{\hn}(A_{t},Y_{t})$ tends to zero as $t$ positive goes to zero. This shows that 
$$cl( \bigcup_{t\in \R} (A_t \times \{t\}))\cap (\R^n \times \{0\})=cl( \bigcup_{t\in \R} (Y_t \times \{t\}))\cap (\R^n \times \{0\}),$$
which is of dimension less than $\dim A_{0_+}$. 
%
\end{proof}


\subsection{Asymptotic critical values.}
Let $f:X \to \R^k$ be a semi-algebraic mapping, with $X$ (not necessarily bounded) manifold. 
Recall that the set of critical values of $f$ is
$$K_0(f):=\{y\in \R^k\;|\; \exists x\in f^{-1}(y)\, , \nu(d_x f)=0\}.$$
\begin{dfn}\label{dfn_critical_values}
The set 
$$K(f):=\{y\in\R^k\;|\; \exists  x_n \in X\, ,  f(x_n)\to y\, \mbox{\rm and}\, (1+|x_n|)\nu(d_{x_n}f)\to 0\}.$$
is called the {\bf set of generalized critical values}.
\end{dfn}

This set of course contains all the critical values of $f$ but also other elements that are sometimes called {\bf asymptotic critical values}. In particular, $K(f)$ contains {\bf the asymptotic critical values at infinity} studied in \cite{kos} and defined as:
$$K_{\infty}(f):=\{y\in\R^k\;|\; \exists x_n \in X\, , |x_n|\to+\infty\, , f(x_n)\to y,\, \mbox{\rm and}\, |x_n|\nu(d_{x_n}f)\to 0\}.$$
We also set $$K_1(f):=\{y\in\R^k\;|\; \exists x_n\in X\, , |x_n|\to fr( X),  f(x_n)\to y,\, \mbox{\rm and}\, \nu(d_{x_n}f)\to 0\}.$$ 

These are the asymptotical critical values at the points of $fr( X)$. Clearly, $$K(f)=K_0(f)\cup K_1(f)\cup K_\infty(f).$$

It is proved in  \cite{kos} that the set $K_\infty(f)$ is of dimension less than $k$.  Theorem \ref{sard} entails  a Sard Theorem for asymptotic critical values:

\begin{thm}\label{thm_kurdyka}
Let $f:X \to \R^k$ be a ${\cal C}^1$ semi-algebraic mapping, where $X$ is a ${\cal C}^1$-submanifold of $\R^n$. The set of asymptotic critical values has dimension less than $k$. 
\end{thm}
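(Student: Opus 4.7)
The plan is to decompose $K(f) = K_0(f) \cup K_1(f) \cup K_\infty(f)$. The classical Sard theorem gives $\dim K_0(f) < k$, and \cite{kos} yields $\dim K_\infty(f) < k$, so the whole problem reduces to showing $\dim K_1(f) < k$. I would handle $K_1$ by applying Theorem \ref{sard} inside the real closed field $k(0_+)$ to a bounded piece of $X$ and transferring the conclusion back to $\R$ via Lemma \ref{lem_familles}.

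Fix $N \in \N$ and set $X^N := X \cap B^n(0,N)$, a bounded open $C^1$-submanifold of $\R^n$; its $k(0_+)$-extension $X^N_{0_+}$ is contained in the ball of integer radius $N$. Applying Theorem \ref{sard} to $f_{0_+} : X^N_{0_+} \to k(0_+)^k$ with the infinitesimal $z = T \in v_{0_+}$, I obtain that $f_{0_+}(c_T(f_{0_+}))$ is $(k,v_{0_+})$-thin, and hence $v_{0_+}$-thin (the set has dimension at most $k$, and such a set of dimension $<k$ is automatically thin). This image is precisely the generic fiber of the semi-algebraic family
\[
A_t := f\bigl(\{x \in X^N \mid \nu(d_x f) < t\}\bigr), \qquad t > 0,
\]
so Lemma \ref{lem_familles} tells us that $\dim B_N < \dim A_t \le k$, where
\[
B_N := cl\Bigl(\bigcup_{t>0} A_t \times \{t\}\Bigr) \cap (\R^k \times \{0\}).
\]

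Next I would check that the bounded part $K_1^N := \{y \in K_1(f) \mid \text{some witnessing sequence lies in } X^N\}$ is contained in $B_N$. Given witnesses $x_n \in X^N$ with $f(x_n) \to y$ and $\nu(d_{x_n} f) \to 0$, the choice $t_n := \nu(d_{x_n} f) + 1/n > 0$ puts $x_n$ in the $t_n$-critical set, so $(f(x_n), t_n) \in A_{t_n} \times \{t_n\}$ converges to $(y,0)$ and hence $y \in B_N$. This forces $\dim K_1^N < k$ for every $N$.

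Finally, $K_1(f) = \bigcup_{N \in \N} K_1^N$ is a countable union of closed semi-algebraic sets (closedness follows from the ``$\forall \varepsilon, \exists x$'' shape of the defining formula). If $\dim K_1(f)$ were $k$, then being semi-algebraic it would contain a non-empty open ball in $\R^k$, which by the Baire category theorem cannot be a countable union of closed sets of empty interior; contradiction. Hence $\dim K_1(f) < k$, and so $\dim K(f) < k$. The conceptual hurdle is to set up the right semi-algebraic family so that its generic fiber matches the hypothesis of Theorem \ref{sard}; once this encoding is in place, Lemma \ref{lem_familles} is tailor-made to deliver the real dimension bound, and a standard Baire argument absorbs the parameter $N$.
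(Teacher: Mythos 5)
Your argument is correct in substance and follows the same global strategy as the paper---reduce to Theorem \ref{sard} over $k(0_+)$ and transfer the conclusion back to $\R$ via Lemma \ref{lem_familles}, absorbing a countable parameter by Baire---but the details are genuinely different. The paper does not cite \cite{kos} for $K_\infty(f)$: it reproves $\dim K_\infty(f)<k$ with the very same machinery, rescaling $X$ into the unit ball via $g_t(x):=f(x/t)$, which is part of the point of the section. For $K_1(f)$ the paper uses the Curve Selection Lemma to split $K_1=\bigcup_i C_i'$ according to the \emph{rate of decay} of $\nu$ along sequences, and then applies Theorem \ref{sard} to the family $h_t:=f|_{Y'_t}$ on the shell $Y'_t:=\{x\in X\mid t/2<d(x,fr(X))<2t\}$. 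You instead split by the Euclidean radius $N$ and take the family $A_t:=f(\{x\in X^N\mid \nu(d_xf)<t\})$, thresholding directly on the Rabier function. Your version avoids the Curve Selection Lemma and makes the boundedness required by Theorem \ref{sard} completely explicit, which is a clean feature; what you give up is the self-contained re-derivation of the Kurdyka--Orro--Simon bound on $K_\infty$.

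One step is misjustified: the parenthetical claim that $(k,v_{0_+})$-thin implies $v_{0_+}$-thin because ``such a set of dimension $<k$ is automatically thin'' is not correct. By Definition \ref{thin-set}, $(k,v)$-thin is the disjunction ``$v$-thin \emph{or} $\dim<k$,'' and a set of dimension $<k$ is not automatically $v$-thin---$v$-thinness of a $d$-dimensional set is measured by projections onto $d$-dimensional subspaces, not $k$-dimensional ones, and a low-dimensional set can certainly have full-measure projections there. The conclusion $\dim B_N<k$ nonetheless holds, but it should be obtained by a case distinction: if $A_{0_+}$ is $v_{0_+}$-thin, Lemma \ref{lem_familles} gives $\dim B_N<\dim A_t\le k$; if instead $\dim A_{0_+}<k$, then $\dim A_t<k$ for small $t>0$, and since $B_N$ is contained in the frontier of $\bigcup_{t>0}(A_t\times\{t\})$ one already has $\dim B_N\le\dim A_t<k$ without invoking the lemma. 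Replacing your parenthetical by this dichotomy closes the gap; the rest of the argument, including the inclusion $K_1^N\subset B_N$, the closedness of $K_1^N$, and the Baire step, is sound.
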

\begin{proof}
We first establish that $K_\infty(f)$ has dimension less than $k$.
It easily follows from Curve Selection Lemma that  we have $$K_\infty(f)=\cup_{i=1} ^\infty C_i,$$ where $C_i$ is the semi-algebraic set defined by 
$$C_i:=\{y \in \R^k|\exists (x_l)_{l\ge l_0}, x_l  \in  X, |x_l|=l, \lim f(x_l) = y,  \nu(d_{x_l} f)\le l^{-(1+1/i)} \} .$$
By the Baire Property, it suffices to establish that $C_i$ has empty interior for all $i$. For this purpose, let for $t \in \R$, $ Y_{t}:=\{x \in B^n(0,1)|\frac{x}{t} \in X\}$ and, for $x\in Y_t$,  $g_t(x):=f(\frac{x}{t})$. 
 Let also
$$Z_{i,t} := \{x \in Y_t\,|\;\nu (d_x g_t)\le t^{1/i} \}.$$ Observe that $Z_{i,0_+}=c_{t^{\frac{1}{i}}}(g_{0_+})$, so that, by Theorem \ref{sard},  $g_{0_+}(Z_{i,0_+})$ is $(k,v_{0_+})$-thin. By Lemma \ref{lem_familles}, this implies that the set\begin{equation}\label{eq_points_critiques}
cl( \bigcup_{t\in \R} (g_t(Z_{i,t}) \times \{t\}))\cap (\R^k \times \{0\})\end{equation}
has dimension less than $k$. As this set contains $C_i$, this yields $\dim C_i<k$, as required.

 It remains to show that $K_1(f)$ has dimension less than $k$. We shall make use of a similar argument. It again easily follows from Curve Selection Lemma  that  $$K_1(f)=\cup_{i =1}^\infty C_i',$$ where $C_i'$ is the semi-algebraic set defined by 
$$C_i':=\{y \in \R^k|\exists (x_l)_{l\ge l_0} \mbox{ such that } x_l\in  X, d(x_l,fr( X))=\frac{1}{l}, \lim f(x_l) = y,  \nu(d_{x_l} f)\le l^{\frac{-1}{i}} \} .$$
Again, by the Baire Property, it suffices to establish that $C_i'$ has empty interior for all $i$.  For $t>0$, tet  $h_t$ be the restriction of $f$ to the set $Y'_{t}:=\{x \in X| \frac{t}{2}<d(x,fr( X))< 2t\}$. By Theorem \ref{sard}, $h_{0_+}(c_{t^{\frac{1}{i}}}(h_{0_+}))$ is $(k,v_{0_+})$-thin for all $i$.
 By Lemma \ref{lem_familles} this establishes that $\dim C'_i<k$  (with the same argument as for $C_i$ and $c_{t^{\frac{1}{i}}}(g_{0_+})$, see (\ref{eq_points_critiques})).
\end{proof}



\begin{rem}\label{rem_fin}These theorems remain valid on an arbitrary o-minimal structure with minor or no modifications of the proofs. Namely, if the o-minimal structure is polynomially bounded then the same argument applies. If it is not assumed to be polynomially bounded, only the proof of Theorem \ref{thm_kurdyka}  requires a slight modification.
 \end{rem}

\end{document}